\documentclass[11pt,letterpaper]{amsart}

\usepackage{amsmath,amssymb,amsthm,mathtools}
\usepackage{xcolor}
\usepackage[colorlinks=true,linkcolor=blue!55!black,citecolor=blue!55!black,urlcolor=blue!55!black]{hyperref}
\hypersetup{
  pdftitle={Split Normal Bundles and Rigidity of Holomorphic Immersions from Compact Ball Quotients},
  pdfauthor={Rong Du},
  pdfsubject={Holomorphic immersions of compact complex hyperbolic space forms},
  pdfkeywords={complex hyperbolic space form, holomorphic immersion, normal bundle, Higgs bundle, total geodesy}
}

\newtheorem{theorem}{Theorem}[section]
\newtheorem{proposition}[theorem]{Proposition}
\newtheorem{lemma}[theorem]{Lemma}
\newtheorem{corollary}[theorem]{Corollary}
\theoremstyle{remark}
\newtheorem{remark}[theorem]{Remark}

\newcommand{\BB}{\mathbb B}

\newcommand{\OO}{\mathcal O}
\newcommand{\rk}{\operatorname{rk}}
\newcommand{\Ric}{\operatorname{Ric}}

\newcommand{\ch}{\operatorname{ch}}

\title[Split Normal Bundles and Rigidity]{Split Normal Bundles and Rigidity of Holomorphic Immersions
from Compact Ball Quotients}
\author{Rong Du}
\address{School of Mathematical Sciences, Key Laboratory of MEA (Ministry of Education)
and Shanghai Key Laboratory of PMMP, East China Normal University,
Rm.~312, Math. Bldg, No.~500, Dongchuan Road, Shanghai, 200241, P.~R. China.}
\email{rdu@math.ecnu.edu.cn}
\thanks{The author is sponsored by National Natural Science Foundation of China (Grant No.~12471040) and Science and Technology Commission of Shanghai Municipality (Grant No.~22DZ2229014).}
\thanks{The author used DeepSeek for language polishing, grammar correction, and structural revision during the preparation of this manuscript. The author reviewed and edited all AI-generated output and takes full responsibility for the content of the publication.}
\makeatletter
\@ifundefined{subjclassname@2020}{%
  \@namedef{subjclassname@2020}{\textup{2020} Mathematics Subject Classification}%
}{}
\makeatother
\subjclass[2020]{53C55, 32Q45, 32H02, 14D07}
\keywords{Complex hyperbolic space form; holomorphic immersion; normal bundle;
Higgs bundle; variation of Hodge structure; totally geodesic immersion.}
\date{}

\begin{document}

\begin{abstract}
Let $f$ be a holomorphic immersion from a compact complex hyperbolic space
form into a higher-dimensional complex hyperbolic space form.  We prove that
if the normal bundle decomposes as a direct sum of holomorphic line bundles,
then the immersion is totally geodesic.  This result has no restriction on
the codimension and extends, under the splitting hypothesis, the classical
rigidity theorem of Cao--Mok.  The proof uses the polarized complex variation
of Hodge structure induced from the target.  The line-bundle decomposition
produces nef zero-Higgs line quotients, and a Chern-character identity forces
the Gauss-defect form to have nontrivial nullity at every point.  The
null-foliation and boundary argument of Cao--Mok then shows that the defect
vanishes identically.  In particular, this answers a question posed by Mok
almost twenty years ago about decomposable normal bundles for complex surfaces immersed in
four-dimensional complex hyperbolic space forms.
\end{abstract}

\maketitle

\section{Introduction}
The immersion problem for smooth ball quotients has long been of concern to mathematicians.
Let
\[
 X=\BB^n/\Gamma,\qquad M=\BB^m/\Lambda
\]
be smooth complex hyperbolic space forms, and assume that $X$ is compact.
For a holomorphic immersion $f:X\to M$, write
\begin{equation}\label{eq:tangent-sequence}
 0\longrightarrow T_X\xrightarrow{\,df\,}f^*T_M
 \longrightarrow N_f\longrightarrow0
\end{equation}
for its tangent sequence.  We call $N_f$ \emph{line-split} if it is a direct
sum of holomorphic line bundles.

Cao and Mok proved that every such immersion is totally geodesic when
$m\leq2n-1$ \cite{CaoMok1990}.  To extended their result to proper
immersions between finite-volume complex hyperbolic space forms under the
same dimension bound \cite{To1993}.  These results form part of the broader
rigidity theory of locally symmetric spaces.  Mok, Siu, and Yeung developed
geometric superrigidity using Bochner identities for harmonic maps
\cite{MokSiuYeung1993}, and Yeung later gave a unified tensorial approach to
vanishing theorems and rigidity for locally symmetric manifolds
\cite{Yeung2001}.  Those works provide important methodological context, but
do not settle the rank-one higher-codimension immersion problem considered
here.  In that range, no non-totally-geodesic immersion with compact source
is known to the author.  In the first open case $n=2$, $m=4$, Mok asked
whether the rank-two normal bundle of a hypothetical non-totally-geodesic
immersion must be indecomposable \cite[Question~4]{Mok2005}.

The following theorem treats every dimension and every codimension at once.

\begin{theorem}\label{thm:main}
Let $n\geq2$ and $m>n$.  Let
\[
 f:X=\BB^n/\Gamma\longrightarrow M=\BB^m/\Lambda
\]
be a holomorphic immersion, where $X$ is compact and both quotients are
smooth.  If
\begin{equation}\label{eq:normal-splitting}
 N_f\simeq\bigoplus_{i=1}^{m-n}L_i
\end{equation}
for holomorphic line bundles $L_i$, then $f$ is totally geodesic.  With the
ball metrics normalized to have holomorphic sectional curvature $-2$, it is
an isometric totally geodesic immersion.
\end{theorem}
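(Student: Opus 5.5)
We follow the strategy sketched in the abstract, working with the uniformizing variation of Hodge structure pulled back from $M$.

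On $M=\BB^m/\Lambda$ the standard representation of $\Lambda\subset PU(m,1)$ (lifted, after passing to a finite cover if necessary) gives a polarized complex VHS with Higgs bundle $E=E^{1,0}\oplus E^{0,1}$. Here $E^{1,0}=T_M\otimes L^{-1}$ and $E^{0,1}=L^{-1}$ for a suitable root $L$ of $K_M$ (up to twisting), and the Higgs field $\theta:E^{0,1}\to E^{1,0}\otimes\Omega^1_M$ is the tautological isomorphism. We pull this back by $f$. Composing $f^*\theta$ with the quotient $f^*T_M\to N_f$ from \eqref{eq:tangent-sequence} yields the subsystem $T_X\otimes L^{-1}\oplus L^{-1}$ and the quotient $N_f\otimes L^{-1}$ on which the Higgs field vanishes. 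By Griffiths' curvature formula, quotients of $E^{1,0}$ killed by the Higgs field are seminegative-curvature-free in the right direction; precisely, a zero-Higgs quotient of a polystable Higgs bundle has degree $\ge0$ along curves. Using \eqref{eq:normal-splitting}, each $L_i\otimes L^{-1}$ is a rank-one zero-Higgs quotient of $f^*E$, hence nef. Equivalently, in metric terms, with the induced quotient metric its curvature is $\sqrt{-1}\,\sigma_i\wedge\bar\sigma_i$-type plus a semipositive contribution. Here $\sigma$ is the second fundamental form (the Gauss defect) $\sigma\in H^0(X,S^2\Omega_X\otimes N_f)$, and $\sigma_i$ is its projection to $L_i$ after a smooth orthogonal splitting.

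The second step is a Chern-character identity. On one hand the curvature of $f^*T_M$ with the ball metric is that of constant holomorphic sectional curvature $-2$. So $\ch(f^*T_M)$ is computed from $\ch(T_X)$, which is also that of constant curvature since $X$ is a compact ball quotient, and $c_1$ of the pulled-back hyperplane class. This gives $\ch(N_f)=\ch(f^*T_M)-\ch(T_X)$ as an explicit polynomial in $\omega_X$ and $f^*\omega_M$. On the other hand, line-splitting gives $\ch(N_f)=\sum_i e^{c_1(L_i)}$. We compare degree-two components integrated against $\omega_X^{n-2}$, and use the Gauss equation $f^*\omega_M=\omega_X+\sqrt{-1}\,\mathrm{tr}(\sigma\wedge\bar\sigma)$-type expression together with nefness of each $L_i\otimes L^{-1}$. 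Hodge index, applied to the nef classes $c_1(L_i)-c_1(L)$, then forces equality in a pointwise Cauchy--Schwarz inequality. Concretely, each $\sigma_i(v,\cdot)$ must have rank at most one, so the defect form $\sigma$, viewed as a map $T_{X,x}\to \mathrm{Hom}(T_{X,x},N_{f,x})$, has a nontrivial kernel (nullity) at every point $x$. Extracting this pointwise statement from an integral identity is the step I expect to be the main obstacle. The natural route is to write the difference of both sides as the integral of a pointwise nonnegative form, built from $\sum_i|\sigma_i|^2$ minus the rank-one part, and to show it vanishes. The nefness of the line quotients is what supplies the sign. I would try first to set this up with $n=2$, $m=4$ and then induct on the number of summands.

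Finally, we invoke the Cao--Mok argument \cite{CaoMok1990}. Where $\sigma\neq0$, the null spaces of $\sigma$ define a holomorphic foliation by complex submanifolds on which the immersion is totally geodesic to first order. The Gauss--Codazzi equations show that the leaves are totally geodesic in $\BB^n$ after lifting, hence are pieces of totally geodesic subballs. Compactness of $X$, together with the boundary behaviour of such leaves at $\partial\BB^n$, contradicts the degeneration of $\sigma$ on an open set unless $\sigma\equiv0$. Hence $f$ is totally geodesic, and the Gauss equation with $\sigma=0$ gives $f^*g_M=g_X$ under the stated normalizations.
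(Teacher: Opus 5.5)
\textbf{There is a genuine gap: the middle step, which you flag yourself, is never resolved, and the pointwise target you propose for it would not suffice.} Your outer framework is the paper's. It uses the pulled-back uniformizing Higgs bundle. Each $L_i\otimes L^{-1}$ (the paper's $A_i=W\otimes L_i$) is a zero-Higgs line quotient, hence nef by restricting to curves and using Simpson's polystability. There is a Chern-character identity from the two constant-curvature structures, and the Cao--Mok null-foliation and boundary argument finishes.

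Your target is that each component $\sigma_i$ has rank at most one as a symmetric form. Then the common kernel $\bigcap_i\ker\sigma_i$ can have codimension as large as $m-n$. It is forced to be nonzero only when $m-n<n$, i.e.\ exactly in the Cao--Mok range $m\le 2n-1$, where nothing new is claimed. For $(n,m)=(2,4)$, take $\sigma=(dz^1)^2\otimes e_1+(dz^2)^2\otimes e_2$. Both components have rank one, but $\sigma(\xi,\cdot)=\xi^1dz^1\otimes e_1+\xi^2dz^2\otimes e_2$ vanishes only for $\xi=0$. The proposed tools also do not fit. The Hodge index theorem bounds $a^2K^{n-2}$ from above, not from below. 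And since the holomorphic splitting need not be orthogonal, the classes $c_1(A_i)$ have no pointwise representatives controlled by the individual $\sigma_i$. So there is no evident per-summand pointwise inequality for an integral identity to saturate.

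\textbf{The missing idea is a sign opposition in the degree-four part of the identity, after twisting so that the line classes become nef.} Put $w=c_1(L^{-1})$. Flatness gives $(m+1)w=D:=f^*c_1(K_M)$, so it is $L^{-1}$, not $L$, that is a root of $K_M$. Set $a_i=c_1(L_i)+w$ and $\alpha=K/(n+1)-D/(m+1)$. Then $\alpha$ is the class of the semipositive Gauss-defect form $\eta$, which is proportional to $\sqrt{-1}\sum_{A,i,j,k}\sigma^A_{ik}\overline{\sigma^A_{jk}}\,dz^i\wedge d\bar z^j$ by the Gauss equation. Multiplying your identity $\ch(N_f)=\ch(f^*T_M)-\ch(T_X)$ by $e^{w}$ gives
\[ \sum_{i=1}^{m-n}e^{a_i}=m+1-(n+1)e^{-\alpha},\quad\text{hence}\quad \sum_{i=1}^{m-n}a_i^2=-(n+1)\alpha^2. \]
Pair both sides with $K^{n-2}$. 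The left side is $\geq0$ because each $a_i$ is nef. The right side is $\leq0$ because $\alpha=\frac{1}{n+1}\sum_i a_i$ is nef. Hence $\int_X\eta^2\wedge\omega_X^{n-2}=0$.

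Pointwise, this integrand is a positive multiple of $\sum_{j<k}\lambda_j\lambda_k\,\omega_X^n$, where $\lambda_j\geq0$ are the eigenvalues of $\eta$. So it vanishes identically, and $\eta$ has rank at most one everywhere. The kernel of $\eta_x$ is the relative nullity $\{\xi:\sigma(\xi,\cdot)=0\}$, so it has dimension at least $n-1\geq1$ at every point. This is exactly the input the Cao--Mok argument needs, and it holds for every $m$. Nefness enters only through $a^2K^{n-2}\geq0$; no Hodge index, per-summand Cauchy--Schwarz, or induction on the number of summands is involved. Without this step, your argument gives nothing beyond Cao--Mok in the range $m\geq 2n$, where the theorem is new.
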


\begin{corollary}\label{cor:mok}
Let $f:X^2\to M^4$ be a holomorphic immersion between complex hyperbolic
space forms, with $X$ compact.  If $N_f$ is decomposable, then $f$ is totally
geodesic.  Thus every hypothetical non-totally-geodesic immersion of this
type has indecomposable normal bundle.
\end{corollary}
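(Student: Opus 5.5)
The statement to prove is Corollary~\ref{cor:mok}, and the plan is to deduce it directly from Theorem~\ref{thm:main} with $n=2$, $m=4$. I would take Theorem~\ref{thm:main} as established and check three things: that its hypotheses hold in this case, that ``decomposable'' for a rank-two bundle means exactly ``line-split'', and that the contrapositive yields the second sentence.

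First, the dimensional hypotheses. Here $n=2\geq2$ and $m=4>n$. The source $X=\BB^2/\Gamma$ is compact by assumption. Both quotients are smooth, since they are complex hyperbolic space forms, i.e.\ manifolds covered by the ball. So the only hypothesis left to verify is the splitting condition \eqref{eq:normal-splitting}.

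Second, the splitting condition. The tangent sequence \eqref{eq:tangent-sequence} shows that $N_f$ is a holomorphic vector bundle of rank $m-n=2$. Decomposable means $N_f\simeq E_1\oplus E_2$ with each $E_i$ a nonzero holomorphic subbundle. Ranks are positive and add up to $2$, so $\rk E_1=\rk E_2=1$. Thus $N_f\simeq L_1\oplus L_2$ with $L_1,L_2$ holomorphic line bundles, which is exactly \eqref{eq:normal-splitting} with $m-n=2$. Theorem~\ref{thm:main} then shows that $f$ is totally geodesic, and isometric for the normalized metrics.

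Third, the final sentence is the contrapositive. If $f:X^2\to M^4$ is not totally geodesic, then $N_f$ cannot be decomposable, so it is indecomposable. This answers \cite[Question~4]{Mok2005}.

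There is no genuine obstacle here; all the difficulty is in Theorem~\ref{thm:main}. The one point to state carefully is that decomposability means splitting into nonzero summands; this is what forces both summands to have rank one.
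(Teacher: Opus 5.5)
Your proposal is correct and is essentially the paper's argument: a decomposable rank-two bundle splits into two nonzero summands, hence into two holomorphic line bundles, so the corollary is the case $(n,m)=(2,4)$ of Theorem~\ref{thm:main}, and the second sentence is its contrapositive. Your explicit check of the remaining hypotheses ($n\geq2$, $m>n$, $X$ compact, both quotients smooth) spells out what the paper leaves implicit in its one-line proof.
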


\begin{proof}
A decomposable rank-two bundle is a direct sum of two line bundles, so the
claim is the case $(n,m)=(2,4)$ of Theorem~\ref{thm:main}.
\end{proof}

The point of the proof is that the splitting hypothesis produces positivity
in exactly the classes that measure the failure of total geodesy.  Put
\[
 K=c_1(K_X),\qquad D=f^*c_1(K_M),\qquad
 \alpha=\frac{K}{n+1}-\frac{D}{m+1}.
\]
The class $\alpha$ is represented by a semipositive Gauss-defect form
$\eta$, and $\eta=0$ precisely when the second fundamental form vanishes.
After a finite \'etale cover, the standard representation of the target
monodromy gives a polarized complex variation of Hodge structure.  Each
summand $L_i$ in \eqref{eq:normal-splitting} yields a zero-Higgs line
quotient $A_i$; restriction to curves shows that $A_i$ is nef.  The
uniformizing Chern identity for $X$ then gives
\[
 \sum_i\exp(c_1(A_i))
 =m+1-(n+1)\exp(-\alpha).
\]
The terms of degrees one and two have opposite positivity signs.  This
forces $\alpha^2K^{n-2}=0$, hence the Gauss defect has a null direction at
every point.  Sections~3--4 of Cao--Mok \cite{CaoMok1990} turn this nullity
into total geodesy; their dimension bound is used only earlier, to obtain
the nullity that is supplied here by the splitting.

\section{The Gauss defect and its nullity}

Normalize every ball metric to have holomorphic sectional curvature $-2$.
If $Y=\BB^r/\Pi$ is a smooth quotient and $\omega_Y$ is its K\"ahler form,
then
\begin{equation}\label{eq:canonical-kahler}
 \left[\frac{\omega_Y}{2\pi}\right]
 =\frac{c_1(K_Y)}{r+1}.
\end{equation}
In particular, $K_X$ is ample and $X$ is projective.

Let $h=f^*g_M$ be the induced K\"ahler metric on $X$, let
$\omega_h=f^*\omega_M$, and let $B$ be the second fundamental form, where
$g_M$ and $\omega_M$ denote the normalized ball metric and its K\"ahler
form.  Define
\begin{equation}\label{eq:defect-form}
 \eta=-\frac{1}{2\pi(n+1)}\Ric(h)
      -\frac{1}{2\pi}\omega_h.
\end{equation}
Here $\Ric(h)$ denotes the Ricci form, with the convention that
$[-\Ric(h)/(2\pi)]=c_1(K_X)$.  In a unitary frame for $h$, the Gauss
equation is
\[
 R_{i\bar j k\bar l}
 =-(h_{i\bar j}h_{k\bar l}+h_{i\bar l}h_{k\bar j})
  -\sum_{A\in\{1,2,\dots, m-n\}} B^A_{ik}\overline{B^A_{jl}}
\]
which yields
$$R_{i\bar{j}} = -(n+1) h_{i\bar{j}} - \sum_{A, k} B_{ik}^A \overline{B_{jk}^A}.$$
We can rewrite this tensor equation in terms of the Ricci form $\Ric(h)$ and the K\"{a}hler form $\omega_h$:
$$\Ric(h) = -(n+1)\omega_h - \sqrt{-1} \sum_{A,i,j,k} B^A_{ik}\overline{B^A_{jk}} \, dz^i \wedge d\bar z^j.$$
Equation (\ref{eq:defect-form}) defines the Gauss-defect form as
\begin{align}
\eta &= -\frac{1}{2\pi(n+1)}\Ric(h)
      -\frac{1}{2\pi}\omega_h. \notag\\
&= -\frac{1}{2\pi(n+1)}\left( -(n+1)\omega_h - \sqrt{-1}\sum_{A,i,j,k} B^A_{ik}\overline{B^A_{jk}} \, dz^i \wedge d\bar z^j \right) - \frac{1}{2\pi}\omega_h\notag \\
&= \frac{\sqrt{-1}}{2\pi(n+1)}
 \sum_{A,i,j,k}B^A_{ik}\overline{B^A_{jk}}
 \,dz^i\wedge d\bar z^j. \label{eq:defect-positive}
\end{align}

Consequently, $\eta$ is closed and semipositive, and
\begin{equation}\label{eq:defect-zero}
 \eta\equiv0\quad\Longleftrightarrow\quad B\equiv0
 \quad\Longleftrightarrow\quad f\text{ is totally geodesic}.
\end{equation}
Equations \eqref{eq:canonical-kahler} and \eqref{eq:defect-form} show that
\begin{equation}\label{eq:defect-class}
 [\eta]=\alpha:=\frac{K}{n+1}-\frac{D}{m+1}.
\end{equation}

We record the geometric input that will be applied after the intersection
calculation.
\begin{lemma}[Cao--Mok nullity principle]\label{lem:nullity}
Let $n\geq2$, and let $f:X^n\to M^m$ be a holomorphic immersion from a
compact complex hyperbolic space form of complex dimension $n$ to a complex hyperbolic space form of complex dimension $m$.
If the Gauss-defect form $\eta$ has a nonzero null vector at every point of
$X$, then $\eta\equiv0$ on $X$ and $f$ is totally geodesic.
\end{lemma}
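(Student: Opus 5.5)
The plan follows Sections~3--4 of Cao--Mok \cite{CaoMok1990}, taking the pointwise nullity of $\eta$ as the starting hypothesis. Let $r(x)$ be the rank of $\eta$ at $x$, and let $r_0<n$ be its maximum over $X$; the hypothesis is exactly $r_0<n$. If $r_0=0$ we are done by \eqref{eq:defect-zero}, so assume $r_0\geq1$. The set $U=\{r=r_0\}$ is open and nonempty. On $U$ the kernel
\[
 \mathcal N_x=\{v\in T_xX:\eta(v,\bar w)=0\ \forall w\}
 =\{v: B(v,\cdot)=0\}
\]
has constant positive rank $n-r_0$. The second equality comes from \eqref{eq:defect-positive}, since $\eta(v,\bar v)=\frac{1}{2\pi(n+1)}|B(v,\cdot)|^2$.

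\textbf{Step 1 (the null distribution is a foliation by totally geodesic submanifolds).} I would use the Codazzi equation for the immersion into a space of constant holomorphic sectional curvature. Because the ambient curvature tensor is parallel and has the special form of the Gauss equation above, Codazzi reads $(\nabla_u B)(v,w)=(\nabla_v B)(u,w)$, with no ambient curvature term on the $(1,0)$-part. Take $v,v'$ sections of $\mathcal N$. Differentiating $B(v,\cdot)=0$ and applying Codazzi gives $B(\nabla_u v,w)$ in terms of $(\nabla_v B)(u,w)$, and from this I would deduce that $\mathcal N$ is holomorphic, integrable, and has totally geodesic leaves in $(X,h)$. This is the standard Ferus/Abe-type nullity argument.

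Holomorphicity of $\mathcal N$ and the vanishing of $B$ along it mean that along each leaf, $f$ is totally geodesic into $M$. Hence each leaf $F$ is locally a piece of a complex totally geodesic $\BB^{n-r_0}$ with the induced metric equal to the ball metric of curvature $-2$. Lifting to $\BB^n\to\BB^m$, the lifted leaves become complete totally geodesic balls in the target, provided the leaves stay in $U$.

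\textbf{Step 2 (boundary argument).} This is the main obstacle: showing that leaves through $U$ are complete and that $U=X$, and then deriving a contradiction with $r_0\geq1$. As in Cao--Mok, I would use compactness of $X$ to get a uniform bound on $|B|$ and $|\nabla B|$. Along a leaf, the restriction of $B$ to $\mathcal N^\perp$ satisfies a linear ODE, obtained from Codazzi together with the constant curvature of the leaf. I expect its solutions along geodesics of the leaf to grow exponentially unless they vanish; the mechanism is the Riccati-type equation for the splitting tensor, exactly as in Cao--Mok \S3. A nonzero such solution would contradict boundedness on the compact $X$, once we know the leaf geodesic can be continued indefinitely.

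Continuation needs the leaves not to reach $\partial U$. Here I would argue that along a leaf the rank of $\eta$ cannot drop: the same ODE shows $B|_{\mathcal N^\perp}$ is determined by its initial value, so its rank is constant along leaves. Hence leaves starting in $U$ stay in $U$, and completeness follows from completeness of $h$, which holds because $X$ is compact. Alternatively, one can lift to the universal cover and use the asymptotic boundary behaviour of the totally geodesic ball in $\BB^m$, as in Cao--Mok \S4. Either way, boundedness forces the $(r_0\times r_0)$ block of $\eta$ on $\mathcal N^\perp$ to vanish, contradicting $r_0\geq1$.

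Therefore $r_0=0$, so $\eta\equiv0$, and $f$ is totally geodesic by \eqref{eq:defect-zero}.
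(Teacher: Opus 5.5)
Your Step~1 is the same input the paper takes from Cao--Mok (Propositions~3.1--3.2). Step~2, however, carries all the content of the lemma, and it rests on a growth claim that is false.

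Along a leaf $\Sigma\subset U$, the transverse part $\beta=B|_{\mathcal N^\perp\times\mathcal N^\perp}$ is a holomorphic section of $S^2N_\Sigma^*\otimes N_f|_\Sigma$. Because $B(v,\cdot)=0$ for $v\in T\Sigma$, the Gauss equation gives both $N_\Sigma$ and $N_f|_\Sigma$ curvature exactly $-\omega_\Sigma\otimes I$ in leaf directions. The second-fundamental-form correction to the quotient curvature vanishes there. So $\beta$ lives in a Hermitian bundle of curvature $+\omega_\Sigma\otimes I$.

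On the simply connected leaf $\Sigma\cong\BB^k$, such a bundle is isometric to $\mathcal O^{\oplus N}$ with metric $(1-|z|^2)$ times the standard one. Hence $|\beta|^2=(1-|z|^2)\,\|g(z)\|^2$ with $g$ holomorphic. Taking $g$ constant gives a nonzero solution that decays exponentially along every leaf geodesic. Boundedness of $|B|$ on compact $X$ therefore does not force $\beta=0$, so your expectation that solutions "grow exponentially unless they vanish" fails.

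This is realized geometrically. Take a cone in $\BB^3$ over a curve without inflections, with vertex outside $\overline{\BB^3}$. Its relative nullity is spanned by the rulings, its null leaves are complete totally geodesic discs, and its transverse second fundamental form is nonzero on each leaf and tends to $0$ at the leaf's boundary.

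So leafwise ODE analysis plus a uniform bound cannot produce the contradiction. What is missing is a genuinely global argument on the universal cover. The paper supplies this by citing the boundary argument of \S4 of Cao--Mok: a positive-dimensional null leaf forces $\eta$ to vanish on a nonempty open set, and real analyticity then gives $\eta\equiv0$. Your \emph{alternatively} clause points at that section, but it again ends with ``boundedness forces the block to vanish,'' so it does not fill the gap.

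A smaller point concerns constancy of the rank of $\eta$ along leaves. It does not follow merely from the ODE being determined by initial data, since the splitting tensor is defined only on $U$ and may blow up at $\partial U$. The Ferus-type repair works: a rank drop would force $|B|\to\infty$, contradicting continuity at the limit point. But that repair yields only completeness of the leaves, not vanishing of $\beta$.
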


\begin{proof}
By \eqref{eq:defect-positive}, the null space of $\eta_x$ is the relative
nullity space
\[
 \mathcal N_x=\{\xi\in T_{X,x}:B_x(\xi,u)=0\text{ for every }u\in T_{X,x}\}.
\]
On an open set where its dimension is minimal, the curvature calculation of
Cao--Mok shows that this distribution is integrable, that its leaves are
complex and totally geodesic in both $X$ and $M$, and that the resulting
foliation is holomorphic
\cite[Propositions~3.1--3.2]{CaoMok1990}.  Their boundary argument then
shows that the presence of a positive-dimensional null leaf forces the
Gauss-defect form to vanish on a nonempty open set, and hence everywhere by
real analyticity \cite[Section~4]{CaoMok1990}.  The hypothesis
$m\leq2n-1$ in their theorem is used in Section~2 only to produce a null
direction; Propositions~3.1--3.2 and Section~4 require no upper bound on
$m$.  Thus the assumed pointwise nullity gives $\eta\equiv0$, and
\eqref{eq:defect-zero} completes the proof.
\end{proof}

\section{The uniformizing Higgs bundle and Nef Higgs quotients from a split normal bundle}
The standard representation of $SU(m,1)$ carries a Hermitian form of signature $(m,1)$. In $\mathbb{C}^{m+1}$, consider all one-dimensional complex subspaces passing through the origin. If the squared norm of a non-zero vector on such a complex line with respect to the aforementioned Hermitian form is negative (i.e., $\sum_{j=1}^m |z_j|^2 < |z_{m+1}|^2$), it is called a ``negative line''. If we collect all these negative lines and view them in the complex projective space $\mathbb{CP}^m$, they precisely form an open set, namely the $m$-dimensional complex unit ball $\mathbb{B}^m$. Here, because a negative line uniquely determines its $m$-dimensional orthogonal complement, choosing a negative line is equivalent to specifying a polarized complex Hodge structure. Consequently, $\mathbb{B}^m$ itself serves as the period domain for these Hodge structures.

The immersion of the base manifold $X$ into the ball quotient manifold $M = \mathbb{B}^m/\Lambda$ induces a homomorphism from the fundamental group $\pi_1(X)$ to the ball's isometry group $PU(m,1)$ (i.e., the projective monodromy representation). To perform linear algebra on vector bundles, we lift the monodromy representation from $PU(m,1)$ to the linear matrix group $SU(m,1)$ acting on $\mathbb{C}^{m+1}$ via Lemma~\ref{lem:finite-lift}. This $SU(m,1)$ representation induces a flat complex vector bundle $E$ of rank $m+1$ over $X$. As a point moves on the manifold, the negative lines vary holomorphically, defining a holomorphic subbundle $W \subset E$ of rank $1$. The quotient bundle $V := E/W$ is a holomorphic vector bundle of rank $m$. Smoothly, $E$ decomposes orthogonally as $W \oplus W^\perp$ with $W^\perp \simeq V$. This structure, equipped with the flat connection, the holomorphic subbundle $W$, and the pseudo-Hermitian form serving as the polarization, defines a polarized complex variation of Hodge structure (PVHS) of weight one, whose associated graded Higgs bundle is $E = V \oplus W$.

In general Hodge theory, as a point on the manifold moves, the induced Hodge structure (i.e., the period map into the period domain) cannot vary arbitrarily. Its differential is strictly constrained to map each step of the Hodge filtration only into the ``adjacent'' one; the tangent directions satisfying this constraint constitute the horizontal tangent space of the period domain. However, the Hodge structure here is of weight one. Its filtration consists only of $W$ and the ambient bundle $\mathbb{E}$. Without any intermediate filtration steps, the adjacent transition is simply the map from the rank-one subbundle $W$ to the rank-$m$ quotient bundle $V$. In this specific period domain $\mathbb{B}^m$, the horizontal tangent space coincides with the full tangent space of the period domain. Consequently, any holomorphic map into $\mathbb{B}^m$ satisfies Griffiths transversality automatically and unconditionally, without requiring any additional differential constraints.

%
We first justify passage to such a lift.

\begin{lemma}\label{lem:finite-lift}
There is a finite \'etale cover $p:X'\to X$ such that the projective
monodromy of $f\circ p$ lifts from $PU(m,1)$ to $SU(m,1)$.
\end{lemma}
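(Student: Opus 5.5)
The plan is to realize the obstruction to lifting as a finite-order cohomology class and kill it on a finite étale cover. The monodromy is a homomorphism $\rho:\pi_1(X)\to PU(m,1)$. The relevant structure is the central extension
\[
 1\longrightarrow \mu_{m+1}\longrightarrow SU(m,1)\longrightarrow PU(m,1)\longrightarrow 1 .
\]
Here $\mu_{m+1}$ is the group of scalar $(m+1)$-st roots of unity, and surjectivity holds because any element of $U(m,1)$ can be rescaled by a unit scalar to have determinant one. Pulling back along $\rho$ gives a central extension $\widetilde\Gamma$ of $\Gamma_X:=\pi_1(X)$ by the finite group $\mu_{m+1}$, and $\rho$ lifts exactly when this extension splits. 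Concretely, we choose arbitrary lifts $\tilde\rho(\gamma)\in SU(m,1)$; the failure of multiplicativity is then a $2$-cocycle with values in $\mu_{m+1}$, whose class $o(\rho)\in H^2(\Gamma_X,\mu_{m+1})$ is the obstruction.

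The key steps, in order, are as follows.

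(i) The obstruction is a class in a finite-coefficient group, and this group is finite. Indeed, $\Gamma_X$ is the fundamental group of the compact aspherical manifold $X$ (a quotient of the contractible ball), so $H^2(\Gamma_X,\mu_{m+1})=H^2(X,\mu_{m+1})$, which is finite.

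(ii) It suffices to find a finite-index subgroup $\Gamma'\subset\Gamma_X$ on which $o(\rho)$ restricts to zero. The corresponding cover $X'\to X$ is finite étale, and its monodromy is $\rho|_{\Gamma'}$, so the lift exists there.

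(iii) To produce $\Gamma'$, we avoid abstract cohomology and use the explicit extension. Let $\widetilde\Gamma=\{(\gamma,g)\in\Gamma_X\times SU(m,1): \bar g=\rho(\gamma)\}$. This is a finitely generated group containing $\mu_{m+1}$ as a finite central subgroup. It is also linear, since it embeds into $\Gamma_X\times SU(m,1)$, and $\Gamma_X$ is linear as a lattice in $PU(n,1)$, hence in a linear algebraic group. By Malcev's theorem, $\widetilde\Gamma$ is residually finite. So there is a finite-index normal subgroup $\widetilde\Gamma'\subset\widetilde\Gamma$ meeting the finite subgroup $\mu_{m+1}$ trivially: for each of the finitely many nontrivial elements, choose a finite-index subgroup avoiding it, and intersect.

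(iv) Then the projection $\widetilde\Gamma'\to\Gamma_X$ is injective with finite-index image $\Gamma'$. Its inverse composed with the second coordinate is a homomorphism $\Gamma'\to SU(m,1)$ lifting $\rho|_{\Gamma'}$. Taking $X'=\BB^n/\Gamma'$ gives the desired finite étale cover $p$.

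The only point needing care is step (iii), namely residual finiteness of the extension. Residual finiteness of $\Gamma_X$ alone does not immediately kill a central extension class. The argument therefore goes through linearity of $\widetilde\Gamma$ itself, and I expect this to be the main (though mild) obstacle. An alternative I would keep in reserve uses the fact that the cocycle takes values in a finite group: it comes from the Bockstein of the class $\rho^*$ of $H^1(\,\cdot\,,U(1))$-type. One then kills it by passing to the cover trivializing the induced map $H_1(X',\mathbb Z)\to\mathbb Z/(m+1)$, using that $X$ is compact so homology is finitely generated. The Malcev route is cleaner, and I would present that.
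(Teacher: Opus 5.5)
Your argument is correct and is essentially the paper's: the same fiber-product group $\widetilde\Gamma\subset\pi_1(X)\times SU(m,1)$ is finitely generated and linear, a finite-index subgroup of it meeting $\mu_{m+1}$ trivially projects isomorphically onto a finite-index subgroup of $\pi_1(X)$, and the second coordinate gives the lift. The only difference is that you use Malcev's residual finiteness where the paper uses Selberg's lemma, which works because a torsion-free finite-index subgroup automatically misses the finite group $\mu_{m+1}$; either suffices, while your step (i) and the reserve Bockstein route (which as sketched tacitly presupposes a lift to $U(m,1)$) are not needed.
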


\begin{proof}
Let $\rho:\pi_1(X)\to PU(m,1)$ be the homomorphism induced by $f$.  Pull
back the central extension
\[
 1\longrightarrow\mu_{m+1}\longrightarrow SU(m,1)
 \longrightarrow PU(m,1)\longrightarrow1
\]
along $\rho$.  The resulting group $\widehat\Gamma$ is a finitely generated
linear group: it embeds in the product of a linear realization of
$\pi_1(X)<PU(n,1)$ and $SU(m,1)$.  By Selberg's lemma
\cite{Alperin1987}, $\widehat\Gamma$ has a torsion-free subgroup
$\widehat\Gamma_0$ of finite index.  Since $\mu_{m+1}$ is finite,
$\widehat\Gamma_0\cap\mu_{m+1}=\{1\}$.  Projection identifies
$\widehat\Gamma_0$ with a finite-index subgroup of $\pi_1(X)$, and the
second projection to $SU(m,1)$ is the required lift over the associated
finite \'etale cover.
\end{proof}

Both line-splitting and total geodesy are preserved under finite \'etale
pullback, and total geodesy descends.  We may therefore pass to the cover in
Lemma~\ref{lem:finite-lift} and keep the notation $X$ and $f$.

Let $\mathbb E$ be the flat rank-$(m+1)$ holomorphic bundle defined by the
lifted standard representation.  The equivariant lift of $f$ to the balls
gives a polarized complex variation of Hodge structure whose Hodge line is
the tautological negative line $W\subset\mathbb E$.  Put
$V=\mathbb E/W$.  Its associated graded
Higgs bundle is
\begin{equation}\label{eq:hodge-bundle}
 E=V\oplus W,\qquad
 \rk V=m,\quad\rk W=1.
\end{equation}

\begin{lemma}\label{lem:tangent-period-domain}
There is a canonical holomorphic isomorphism
\begin{equation} \label{eq:period-tangent}
f^*T_M \simeq \operatorname{Hom}(W, V) \simeq W^* \otimes V.
\end{equation}
\end{lemma}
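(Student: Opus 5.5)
The plan is to derive \eqref{eq:period-tangent} from the standard description of the tangent bundle of projective space, restricted to the open subset $\BB^m\subset\mathbb{CP}^m$ of negative lines, and then to descend it equivariantly to $X$. On $\mathbb{CP}^m=\mathbb P(\mathbb C^{m+1})$ with tautological line $\OO(-1)$, the Euler sequence gives a canonical isomorphism
\[
 T_{\mathbb{CP}^m}\simeq\operatorname{Hom}\bigl(\OO(-1),\,\underline{\mathbb C}^{m+1}/\OO(-1)\bigr).
\]
Concretely, a tangent vector at a line $\ell$ is the derivative of a holomorphic curve of lines $\ell_t$. Choose a local holomorphic generator $s_t\in\ell_t$. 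Then $s\mapsto \dot s_0 \bmod \ell$ is a linear map $\ell\to\mathbb C^{m+1}/\ell$, and it is independent of the choice of generator, because rescaling $s_t$ changes $\dot s_0$ only by a multiple of $s_0\in\ell$. Restricting to $\BB^m$ gives $T_{\BB^m}\simeq\operatorname{Hom}(\mathcal W,\underline{\mathbb C}^{m+1}/\mathcal W)$, where $\mathcal W$ is the tautological negative line.

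The key observation is that this isomorphism is $GL(m+1)$-equivariant, hence in particular $SU(m,1)$-equivariant. The construction uses only the linear structure of $\mathbb C^{m+1}$: no choice of Hermitian form and no choice of splitting enters.

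Next I pass to $X$. Let $\tilde f:\BB^n\to\BB^m$ be the equivariant lift of $f$, with respect to the lifted monodromy $\hat\rho:\pi_1(X)\to SU(m,1)$ supplied by Lemma~\ref{lem:finite-lift}. Pulling back by $\tilde f$ gives a $\pi_1(X)$-equivariant isomorphism on $\BB^n$,
\[
 \tilde f^*T_{\BB^m}\simeq\operatorname{Hom}\bigl(\tilde f^*\mathcal W,\ \underline{\mathbb C}^{m+1}/\tilde f^*\mathcal W\bigr).
\]
By construction, $\mathbb E$ is the quotient of $\BB^n\times\mathbb C^{m+1}$ by $\pi_1(X)$, and $W\subset\mathbb E$ is the descent of $\tilde f^*\mathcal W$. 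So the right-hand side descends to $\operatorname{Hom}(W,V)$. The left-hand side descends to $f^*T_M$, since $T_M$ is the quotient of $T_{\BB^m}$ by $\Lambda$ and the action of $\Lambda$ on $T_{\BB^m}$ factors through $PU(m,1)$.

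I need to check one point here. The action on $\operatorname{Hom}(\mathcal W,\mathbb C^{m+1}/\mathcal W)$ comes from $SU(m,1)$, but scalars in the center $\mu_{m+1}$ act trivially on $\operatorname{Hom}$. Hence both sides carry the same $\pi_1(X)$-action, and the equivariant isomorphism descends. Both bundles are holomorphic and the map is given by holomorphic differentiation, so the descended isomorphism is holomorphic. Finally, $\operatorname{Hom}(W,V)\simeq W^*\otimes V$ is tautological.

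The main obstacle I expect is the bookkeeping in this descent. The holomorphic structure of $V=\mathbb E/W$ is the quotient structure inherited from the flat structure on $\mathbb E$. The identification must be made with this quotient structure and not with the $C^\infty$ splitting $W^\perp$; the Euler-sequence derivative description handles this automatically. As a sanity check, I would verify that the composite $T_X\to f^*T_M\simeq\operatorname{Hom}(W,V)$ is the Higgs field $\theta:W\to V\otimes\Omega^1_X$, viewed as a map $T_X\to\operatorname{Hom}(W,V)$. This follows because the Higgs field is exactly "differentiate a local section of $W$ with the flat connection and project to $V$", which is the derivative formula above composed with $df$.
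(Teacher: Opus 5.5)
Your proposal is correct and takes essentially the same route as the paper. Both use the canonical identification $T_{[\ell]}\mathbb{CP}^m\simeq\operatorname{Hom}(\ell,\mathbb{C}^{m+1}/\ell)$, restrict it to the open subset $\BB^m$ of negative lines, and pull it back along $f$, so that the tautological line becomes $W$ and the quotient becomes $V=\mathbb E/W$. Your check of $SU(m,1)$-equivariance and of the descent to $X$, including the observation that the center $\mu_{m+1}$ acts trivially on $\operatorname{Hom}$, makes precise the pullback step that the paper states without detail.
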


\begin{proof}
The period domain $\mathbb{B}^m$ consists of all one-dimensional negative-definite subspaces (negative lines) in the pseudo-Hermitian space $\mathbb{C}^{m,1}$, which embeds as an open domain in the complex projective space $\mathbb{CP}^m \simeq \operatorname{Gr}(1, \mathbb{C}^{m+1})$. For any one-dimensional subspace $L \subset \mathbb{C}^{m+1}$, the holomorphic tangent space of the projective space at $[L]$ is canonically isomorphic to $\operatorname{Hom}(L, \mathbb{C}^{m+1}/L)$.
Since $\mathbb{B}^m \subset \mathbb{CP}^m$ is an open complex submanifold, this identification naturally restricts to the tangent bundle of $\mathbb{B}^m$.

Pulling this structure back to $X$ via the immersion $f$, the tautological
line corresponds to $W\subset\mathbb E$, while the quotient corresponds to
$V=\mathbb E/W$.  Hence
\[
f^*T_M \simeq \operatorname{Hom}(W, V) \simeq W^* \otimes V. \qedhere
\]
\end{proof}

Writing $w=c_1(W)$ and taking first Chern classes, the flatness of
$\mathbb E$ and \eqref{eq:period-tangent} give
\begin{equation}\label{eq:w-class}
 -D=c_1(f^*T_M)=c_1(V)-mw=-(m+1)w,
 \qquad w=\frac{D}{m+1}.
\end{equation}

The Higgs field and its contraction with tangent vectors have the form
\[
 \theta:W\longrightarrow V\otimes\Omega_X^1,
 \quad \theta|_V=0,
 \qquad
 \beta:W\otimes T_X\longrightarrow V.
\]
Here $\beta(w\otimes\xi)=df(\xi)(w)$.  Since $f$ is an immersion,
$\beta$ is a subbundle injection.  Tensoring
\eqref{eq:tangent-sequence} by $W$ and using
\eqref{eq:period-tangent}, we obtain
\begin{equation}\label{eq:higgs-normal}
 0\longrightarrow W\otimes T_X\xrightarrow{\,\beta\,}V
 \longrightarrow W\otimes N_f\longrightarrow0.
\end{equation}

Assume now that $N_f=\bigoplus_{i=1}^{m-n}L_i$, and put
\begin{equation}\label{eq:ai}
 A_i=W\otimes L_i,\qquad a_i=c_1(A_i).
\end{equation}
The quotient in \eqref{eq:higgs-normal}, followed by projection onto a
summand, gives
\[
 q_i:E=V\oplus W\longrightarrow A_i,
 \qquad q_i|_W=0.
\]
This is a Higgs quotient with zero Higgs field.  Indeed, $\theta(V)=0$,
while every coefficient of $\theta(W)$ lies in
$\beta(W\otimes T_X)$, which $q_i$ kills.

\begin{proposition}\label{prop:nef}
Every line bundle $A_i$ in \eqref{eq:ai} is nef.
\end{proposition}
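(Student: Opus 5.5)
We show that $\deg(A_i|_C)\geq 0$ for every irreducible curve $C\subset X$; this is exactly nefness of $A_i$ on the projective manifold $X$. The input is the standard seminegativity of Higgs subsheaves of a polarized variation of Hodge structure, equivalently the semipositivity of Higgs quotients, in its curve version (Griffiths, Simpson, Zuo). We pull everything back to curves.

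Fix an irreducible curve $C\subset X$ and let $\nu:\tilde C\to C$ be its normalization, so $\tilde C$ is a smooth compact Riemann surface. Pull the flat bundle $\mathbb E$ back along $\nu$. The filtration $W\subset\mathbb E$ and the polarization pull back too, so $\nu^*\mathbb E$ is a polarized complex variation of Hodge structure of weight one on $\tilde C$. Its graded Higgs bundle is $\nu^*V\oplus\nu^*W$, with Higgs field $\nu^*\theta$ composed with $\nu^*\Omega^1_X\to\Omega^1_{\tilde C}$. The quotient $\nu^*q_i:\nu^*V\oplus\nu^*W\to\nu^*A_i$ still kills $\nu^*W$. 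It also kills the image of the pulled-back Higgs field, because the coefficients of that image lie in $\nu^*\beta(W\otimes T_X)$, which $\nu^*q_i$ kills. Hence $\nu^*A_i$ is a Higgs quotient line bundle with zero Higgs field, and its kernel $F$ is a $\nu^*\theta$-invariant subbundle.

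Next we use the curvature argument. The Hodge metric on $\nu^*E$ is harmonic, so $\nabla_h+\theta+\theta^*_h$ is flat. Consequently any $\theta$-invariant subsheaf $F$ has $\deg F\leq 0$. Concretely, the second fundamental form adds $-|\theta^\perp|^2$-type terms of the correct sign, and the whole flat bundle has degree zero. Therefore $\deg\nu^*A_i=\deg(\nu^*E)-\deg F\geq 0$. Alternatively, we can argue directly. Since $\nu^*A_i$ is a quotient of $\nu^*V$ and the Hodge metric on $V\simeq W^\perp$ is negative definite up to sign, the induced quotient metric on $A_i$ has curvature equal to the curvature of $V$ plus a semipositive second fundamental form term. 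On the part of $V$ killed by $\theta$, the curvature of $V$ is $-\theta\theta^*$ restricted there. Its contribution to $A_i$ vanishes because $\theta^*$ maps $V$ into $W$, and the relevant component is paired against $q_i\circ\theta=0$. So the quotient curvature form is pointwise semipositive on $\tilde C$, and we get $\deg\nu^*A_i\geq0$. Since $\deg(A_i|_C)=\deg\nu^*A_i$, the proposition follows.

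The main obstacle is this sign computation in the second step. I would first try the clean route, namely Simpson's curve result: the kernel $F$ is $\theta$-invariant, and a harmonic bundle with vanishing degree is Higgs-semistable, so $\deg F\leq 0$. The checks are that the pulled-back PVHS on $\tilde C$ really has degree-zero total Higgs bundle (true, since $c_1(V)+c_1(W)=c_1(\mathbb E)=0$), and that semistability applies even though $\nu^*\theta$ may vanish at some points, i.e. where $C$ is tangent to nothing special. Semistability of harmonic bundles needs no nondegeneracy, so this should work uniformly.
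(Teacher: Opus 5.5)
Your proposal is correct and follows the paper's route. The paper also pulls the variation of Hodge structure back to the normalization $\widetilde C$ of each curve, observes that $\nu^*q_i$ is a Higgs line quotient, and applies Simpson's semistability of the degree-zero Higgs bundle of a PVHS to get $\deg\nu^*A_i\geq0$. Your alternative curvature computation is also valid: the compression of $\Theta_V=-\theta\wedge\theta^*$ to the orthogonal complement of $\ker(V\to A_i)$ vanishes, because $\theta^*$ kills that complement. It gives a slightly stronger conclusion than the paper needs, namely that the quotient Hodge metric on $A_i$ is semipositively curved directly on $X$.
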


\begin{proof}
Let $C\subset X$ be an irreducible curve, and let
$\nu:\widetilde C\to C\hookrightarrow X$ be its normalization.  Pullback of
a polarized complex variation of Hodge structure is again such a
variation.  Its Higgs bundle on the smooth projective curve
$\widetilde C$ is polystable of degree zero
\cite[Theorem~1 and Corollary~1.3]{Simpson1992}.  The map $\nu^*q_i$ is a Higgs line quotient,
so semistability gives
\[
 \deg_{\widetilde C}\nu^*A_i\geq0.
\]
This holds for every irreducible curve in the projective manifold $X$;
hence $A_i$ is nef.
\end{proof}

\section{The Chern identity and proof of the theorem}

The proportionality calculation for a compact ball quotient gives
\begin{equation}\label{eq:chern-polynomial}
 c(T_X)=\left(1-\frac{K}{n+1}\right)^{n+1}
 \quad\text{in }H^*(X,\mathbb R),
\end{equation}
where terms above degree $2n$ are discarded.  Newton's identities applied
to \eqref{eq:chern-polynomial} are equivalently expressed as
\begin{equation}\label{eq:source-ch}
 \ch(\OO_X\oplus T_X)
 =(n+1)\exp\left(-\frac{K}{n+1}\right).
\end{equation}

Since $E=V\oplus W$ is the associated graded of the flat bundle
$\mathbb E$, its positive-degree Chern character vanishes.  Thus
$\ch(V)=m+1-\ch(W)$.  In topological $K$-theory,
\eqref{eq:higgs-normal} and \eqref{eq:source-ch} give
\begin{align}
 \ch(W\otimes N_f)
 &=\ch(V)-\ch(W\otimes T_X)\notag\\
 &=m+1-\ch(W)\ch(\OO_X\oplus T_X)\notag\\
 &=m+1-(n+1)
   \exp\left(\frac{D}{m+1}-\frac{K}{n+1}\right)\notag\\
 &=m+1-(n+1)e^{-\alpha}.
\label{eq:master-ch}
\end{align}
On the other hand, the line-bundle decomposition gives
\[
 \ch(W\otimes N_f)=\sum_{i=1}^{m-n}e^{a_i}.
\]
Comparing the degree-two and degree-four components of
\eqref{eq:master-ch}, we obtain the two identities
\begin{equation}\label{eq:chern-identities}
  \sum_{i=1}^{m-n}a_i=(n+1)\alpha,
 \qquad
 \sum_{i=1}^{m-n}a_i^2=-(n+1)\alpha^2.
\end{equation}

\begin{proposition}\label{prop:defect-square}
Under the line-splitting assumption,
\[
 \alpha^2K^{n-2}=0.
\]
\end{proposition}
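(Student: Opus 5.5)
We intersect the two identities in \eqref{eq:chern-identities} against $K^{n-2}$ and compare signs. Since $K$ is ample (by \eqref{eq:canonical-kahler}) and each $a_i$ is nef (Proposition~\ref{prop:nef}), the Hodge index theorem and the Khovanskii--Teissier type inequalities for nef classes on the projective manifold $X$ are available. The plan is to show that $\sum_i a_i^2K^{n-2}\geq0$ from nefness, while $-(n+1)\alpha^2K^{n-2}\le 0$ from the semipositivity of $\eta$. Equality is then forced.

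\emph{First}, since $\alpha=[\eta]$ with $\eta$ a closed semipositive $(1,1)$-form by \eqref{eq:defect-positive}, and $K$ is represented by a positive multiple of the Kähler form $\omega_X$, the number
\[
\alpha^2K^{n-2}=\int_X\eta\wedge\eta\wedge\left(\tfrac{(n+1)\omega_X}{2\pi}\right)^{n-2}
\]
is nonnegative. The wedge of semipositive $(1,1)$-forms with powers of a Kähler form is a nonnegative top form. Hence the right-hand side of the second identity, intersected with $K^{n-2}$, is $\leq0$.

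\emph{Second}, each $a_i$ is nef, so $a_i^2K^{n-2}\geq0$. This holds because nef classes are limits of ample classes $a_i+\epsilon K$, and $(a_i+\epsilon K)^2K^{n-2}>0$. Thus
\[
0\le\sum_i a_i^2K^{n-2}=-(n+1)\alpha^2K^{n-2}\le0,
\]
which gives $\alpha^2K^{n-2}=0$.

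For $n=2$ the factor $K^{n-2}$ is $1$ and the argument is the same. The first identity in \eqref{eq:chern-identities} is not even needed, though it will be useful afterwards. I expect no serious obstacle. The only step needing care is the pointwise nonnegativity of $\eta^2\wedge\omega_X^{n-2}$. I would verify it by diagonalizing $\eta$ with respect to $\omega_X$ at each point: with eigenvalues $\lambda_j\ge0$, the top form is a positive multiple of $\sum_{j<k}\lambda_j\lambda_k$ times the volume form.

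This pointwise formula also explains how the statement feeds into Lemma~\ref{lem:nullity}. Vanishing of the integral forces $\sum_{j<k}\lambda_j\lambda_k\equiv0$, so at every point at most one eigenvalue of $\eta$ is nonzero. Since $n\ge2$, $\eta$ then has a nonzero null vector everywhere.
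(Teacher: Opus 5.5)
Your proof is correct and follows the paper's argument: you intersect $\sum_i a_i^2=-(n+1)\alpha^2$ with $K^{n-2}$, use nefness of the $a_i$ (with $K$ ample) to get a nonnegative left side, and use nonnegativity of $\alpha^2K^{n-2}$ to get a nonpositive right side. The only variation is how you get $\alpha^2K^{n-2}\ge0$: you use the semipositive representative $\eta$ of $\alpha$, whereas the paper deduces that $\alpha$ is nef from the first identity $\sum_i a_i=(n+1)\alpha$; both routes are valid, and yours makes the first identity unnecessary at this step.
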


\begin{proof}
By Proposition~\ref{prop:nef}, every $a_i$ is nef.  The first identity in
\eqref{eq:chern-identities} shows that $\alpha$ is nef as well.  Since $K$
is ample,
\[
 a_i^2K^{n-2}\geq0
 \quad\text{and}\quad
 \alpha^2K^{n-2}\geq0.
\]
Intersecting the second identity in \eqref{eq:chern-identities} with
$K^{n-2}$ gives
\[
 0\leq\sum_i a_i^2K^{n-2}
 =-(n+1)\alpha^2K^{n-2}\leq0.
\]
Both sides must vanish, proving the assertion.
\end{proof}

\begin{proof}[Proof of Theorem~\ref{thm:main}]
We have already passed to the finite \'etale cover supplied by
Lemma~\ref{lem:finite-lift}; line-splitting pulls back to that cover, and
total geodesy descends to the original $X$.  By
Proposition~\ref{prop:defect-square}, $\alpha^2K^{n-2}=0$.  In view of
\eqref{eq:canonical-kahler} and $[\eta]=\alpha$, this is equivalent to
\[
\int_X \eta^2 \wedge \omega_X^{n-2} = 0,
\]
where $\omega_X$ is the K\"ahler form of the intrinsic normalized ball metric on $X$.

At any point $x \in X$, choose a local $\omega_X$-orthonormal $(1,0)$-basis $\{\theta^1, \dots, \theta^n\}$ diagonalizing the real $(1,1)$-form $\eta_x$:
\[
\omega_{X,x} = \sqrt{-1} \sum_{i=1}^n \theta^i \wedge \bar{\theta}^i, \qquad \eta_x = \sqrt{-1} \sum_{i=1}^n \lambda_i \theta^i \wedge \bar{\theta}^i,
\]
where $\lambda_1, \dots, \lambda_n \in \mathbb{R}$ are the eigenvalues of $\eta_x$ with respect to $\omega_{X,x}$. Since $\eta$ is semipositive by (\ref{eq:defect-positive}), every eigenvalue satisfies $\lambda_i \ge 0$. A direct computation of the wedge product gives
\[
\eta^2 \wedge \omega_X^{n-2} = \frac{2}{n(n-1)} \left( \sum_{1 \le i < j \le n} \lambda_i \lambda_j \right) \omega_X^n.
\]
Because $\lambda_i \ge 0$ and the top volume form $\omega_X^n > 0$, the continuous integrand $\eta^2 \wedge \omega_X^{n-2}$ is pointwise non-negative everywhere on $X$. Having zero integral over the compact manifold $X$, the integrand must vanish identically:
\[
\sum_{1 \le i < j \le n} \lambda_i(x) \lambda_j(x) = 0 \quad \text{for every } x \in X.
\]

Since all $\lambda_i(x) \ge 0$, at most one eigenvalue can be strictly positive at each point $x \in X$. Consequently, the pointwise rank satisfies $\operatorname{rk}(\eta_x) \le 1$. The dimension of the null space $\mathcal{N}_x = \ker \eta_x$ therefore satisfies
\[
\dim_{\mathbb{C}} \mathcal{N}_x = n - \operatorname{rk}(\eta_x) \ge n - 1.
\]
Because $n \ge 2$, we have $\dim_{\mathbb{C}} \mathcal{N}_x \ge 1$, which proves that the Gauss-defect form $\eta$ has a nonzero null vector at every point of $X$.

Applying Lemma~\ref{lem:nullity}, we obtain $\eta \equiv 0$ on all of $X$. By (\ref{eq:defect-zero}), this implies that the second fundamental form $B \equiv 0$ identically, so $f$ is totally geodesic on the cover.

The second fundamental form remains zero after descending from the finite cover to the original manifold $X$. Finally, the universal cover lift $\tilde{f}: \mathbb{B}^n \to \mathbb{B}^m$ of a totally geodesic holomorphic immersion maps $\mathbb{B}^n$ onto a totally geodesic complex sub-ball in $\mathbb{B}^m$. The canonical inclusion $\mathbb{B}^n \hookrightarrow \mathbb{B}^m$ preserves the normalized Bergman metric. Thus $f$ is an isometric totally geodesic immersion.
\end{proof}

\begin{remark}\label{rem:range}
When $m\leq2n-1$, the conclusion of Theorem~\ref{thm:main} follows from
Cao--Mok without any hypothesis on $N_f$.  Thus the contribution of the
line-splitting argument lies in the complementary range $m\geq2n$.  The
target need not be compact; compactness is used only for the source.
\end{remark}


\section*{Acknowledgements}
The author would like to thank Professor N. Mok for stimulating discussions
on ball quotients, for bringing related work to his attention, and for
providing an excellent research environment at the University of Hong Kong
during his visit.

\begingroup
\small
\bibliographystyle{plain}
\bibliography{normal_bundle_ball_quotients_high_dim}

\begin{thebibliography}{1}

\bibitem{Alperin1987}
Roger~C. Alperin.
\newblock An elementary account of {Selberg}'s lemma.
\newblock {\em L'Enseignement Math\'{e}matique}, 33(3-4):269--273, 1987.

\bibitem{CaoMok1990}
Huai-Dong Cao and Ngaiming Mok.
\newblock Holomorphic immersions between compact hyperbolic space forms.
\newblock {\em Inventiones Mathematicae}, 100(1):49--61, 1990.

\bibitem{Mok2005}
Ngaiming Mok.
\newblock On holomorphic immersions into {K\"ahler} manifolds of constant
  holomorphic sectional curvature.
\newblock {\em Science in China Series A: Mathematics}, 48(Supplement
  1):123--145, December 2005.

\bibitem{MokSiuYeung1993}
Ngaiming Mok, Yum-Tong Siu, and Sai-Kee Yeung.
\newblock Geometric superrigidity.
\newblock {\em Inventiones Mathematicae}, 113(1):57--83, 1993.

\bibitem{Simpson1992}
Carlos~T. Simpson.
\newblock Higgs bundles and local systems.
\newblock {\em Publications Math\'{e}matiques de l'IH\'{E}S}, 75:5--95, 1992.

\bibitem{To1993}
Wing-Keung To.
\newblock Total geodesy of proper holomorphic immersions between complex
  hyperbolic space forms of finite volume.
\newblock {\em Mathematische Annalen}, 297(1):59--84, 1993.

\bibitem{Yeung2001}
Sai-Kee Yeung.
\newblock On vanishing theorems and rigidity of locally symmetric manifolds.
\newblock {\em Geometric and Functional Analysis}, 11(1):175--198, 2001.

\end{thebibliography}
\endgroup

\end{document}